\title{Implicitly definable generalized quantifiers}
\author{Fredrik Engström}
\date{\today}
\newtheorem*{thm}{Theorem}
\newtheorem*{lem}{Lemma}
\theoremstyle{definition}
\newtheorem*{defin}{Definition}
\newtheorem*{example}{Example}
\newtheorem*{que}{Question}
\newcommand\Q{\mathsf Q}
\newcommand\imp\rightarrow
\newcommand\Pred{{\text{Pred}}}
\newcommand\str[1]{\mathcal #1}
\begin{document}
\begin{abstract}
We give a new elementary proof of the main theorem of \cite{feferman2012quantifiers}: Quantifiers implicitly definable in pure second-order logic equipped with Henkin semantics implies are (explicitly) definable in first-order logic.
\end{abstract}

\maketitle

%\section*{Dear Christian,}

Dear Chris,

\vspace{2em}

You have recently, together with Jörgen Sjögren in \cite{bennet2011mathematical,bennet2013philosophy}, proposed that concepts should be considered mathematical iff they have a unique explication (in the sense of Carnap). In this short paper I will discuss a somewhat similar idea: Symbols (or operations) should be considered logical iff they can be defined uniquely by inference rules. This idea can be traced back at least to Belnap's response \cite{belnap1962tonk} to Prior's argument in his famous Tonk paper \cite{prior1960runabout}; but let us start from the beginning.

Logic considers the \emph{form}, or to be more precise, the \emph{logical form}, of sentences and arguments. To determine this form we need to distinguish the \emph{logical constants} from the other symbols; we need to characterize the logical constants, answering the fundamental question: 
\begin{quote}
Which symbols should be considered \emph{logical}?
\end{quote}
For example, what is the reason for us think of $\forall$ and $\land$ as logical but not $P$ and $c$ in the formula $\forall x (P(x) \lor x=c)$?

Several answers to this questions has been proposed; we shall concentrate on a proof theoretic idea (or rather a model theoretic take on the proof theoretic idea): Symbols that can be defined by inference rules are logical. Belnap, in \cite{belnap1962tonk}, proposed that ``defined'' should be understood as being governed by \emph{conservative} inference rules that define the symbol \emph{uniquely}. Regarding uniqueness he writes:
\begin{quote}
It seems rather odd to say we have defined plonk unless we can show that A-plonk-B is a function of A and B, i.e., given A and B, there 
is only one proposition A-plonk-B.
\end{quote} 
A possible, and natural, interpretation of what it means for a symbol $\Q$ to be uniquely definable is the following: If $\Q'$ is another symbol governed by the same inference rules as $\Q$, then $\varphi \vdash \varphi'$; where $\Q'$ is not mentioned in $\varphi$ and $\varphi'$ is like $\varphi$ but with all occurrences of $\Q$ replaced by $\Q'$. 

In \cite{feferman2012quantifiers} Feferman argues for a model theoretic take on this proof theoretic explanation of logicality. In model theoretic terms, conservativity can be understood as existence of a sound interpretation of the symbol\footnote{This is not a perfect translation; even when given a sound and complete proof system, model theoretic conservativity (i.e., the existence of a sound interpretation) is strictly stronger than proof theoretic conservativity.} and the uniqueness can be understood as uniqueness of that interpretation. Thus, the proof theoretic notion of unique definability corresponds, in some sense, to the model theoretic notion of \emph{implicit definability}.

In this short paper I will give a new elementary proof of the main result of Feferman in \cite{feferman2012quantifiers} that implicitly definable quantifiers (in a second-order language) are exactly the (explicitly) definable ones (in a first-order language). We end with some open questions and remarks. 

\section*{Technical preliminaries}

Throughout the paper $M$ will denote the domain of a model $\str M$.

We will focus on quantifier symbols, i.e., symbols that bind variables. Their model theoretic interpretations are generalized quantifiers:
\begin{defin}[\cite{lindstrom1966first}]
A \emph{generalized quantifier} $Q$ of type $\langle n_1,n_2,\ldots,n_k \rangle$ is a (class) function mapping sets to sets such that
$$Q_M=Q(M) \subseteq \mathcal P(M^{n_1}) \times \mathcal P(M^{n_2}) \times \ldots \times \mathcal P(M^{n_k}).$$
\end{defin}

The syntax and semantics of the extension of first-order logic with a quantifier symbol $\Q$ of type $\langle n_1,n_2,\ldots,n_k \rangle$, $L(\Q)$, can be exemplified by the formula
$$\Q \bar x_1, \bar x_2,\ldots ,\bar x_k (\varphi_1, \varphi_2, \ldots,\varphi_k)$$
together with the truth condition corresponding to an interpretation $Q$ of the symbol $\Q$:
$$\str M,\!s \models \Q\bar x_1, \ldots, \bar x_k (\varphi_1, \ldots,\varphi_k) \text{ iff } (\llbracket\bar x_1 | \varphi_1\rrbracket^{\str M,s}, \ldots, \llbracket\bar x_k | \varphi_k\rrbracket^{\str M,s}) \in Q_M,$$
where $\llbracket\bar y | \varphi\rrbracket^{\str M,s} = \set{\bar a \in M^l | \str M,s[\bar a / \bar y] \models \varphi}$, and $l$ is the length of the tuple $\bar y$.

The quantifiers of first-order logic, $\forall$ and $\exists$ are examples of generalized quantifiers:
\begin{itemize}
\item $\forall_M = \set{M}$
\item $\exists_M = \set{A \subseteq M | A \neq \emptyset}$
\end{itemize}

Other examples of generalized quantifiers include the following cardinality quantifiers:
\begin{itemize}
\item $(Q_0)_M = \set{A \subseteq M | |A| \geq \aleph_0}$, and in general
\item $(Q_\alpha)_M = \set{A \subseteq M | |A| \geq \aleph_\alpha}$,
\item $I_M = \set{\langle A,B \rangle | A,B \subseteq M, |A|=|B|}$
\end{itemize}

\subsection*{Second-order implicit definitions}

If we agree that the model theoretic notion of implicit definability in some way is the right model theoretic translation of Belnap's proof theoretic notion of unique definability, then  we need to know what it means for a generalized quantifier to be implicitly definable. To answer that we are going to introduce a very basic second-order language: 

The second-order language $L_2$ is pure second-order logic with two kinds of variables:
\begin{itemize}
\item Individual variables: $x,y,z,\ldots$, and
\item Predicate variables (including 0-ary) $P, P_1, P_2 \ldots$
\end{itemize}
The atomic formulas are predicate variables applied to individual variables, as in $P(x,y)$. Formulas are built from atomic formulas using $\lnot,\lor,\land,\imp,\forall,\exists$.
Observe that we may quantify using both individual variables, as in $\forall y$, and predicate variables, as in $\exists P$.

The semantics for $L_2$ used in \cite{feferman2012quantifiers} is so-called Henkin semantics:  An $L_2$ model consists of a set $M$ together with subsets $\Pred_k(\str M)$ of $\mathcal P(M^k)$ for the predicate variables to range over. We follow Feferman in that the subsets $\Pred_k(\str M)$ may be chosen in a completely arbitrary way.\footnote{Observe that in the rather restricted language of $L_2$, there are no first-order formulas (a formula without second-order quantifiers) without free predicate variables. Thus, a very restricted form of comprehension holds in all models for trivial reasons.}

\begin{example}Let the domain of $\str M$ be $\set{1,2}$ and $\Pred_1(\str M)=\set{ \emptyset}$, i.e., the unary predicates are only allowed to be interpreted as the empty set. Then $$\str M \models \forall P\, \forall x \, \lnot P(x).$$
\end{example}

The language $L_2(\Q)$ is $L_2$ extended with a \emph{second-order} predicate symbol $\Q$  of some specified type $\langle n_1,\ldots,n_k \rangle$. For example, $\forall P \, \Q(P)$ is a sentence of $L_2(\Q)$, if $\Q$ is of type $\langle k \rangle$ and $P$ is of arity $k$.

A model of $L_2(\Q)$ consists of a model $\str M$ of $L_2$ together with an interpretation of $\Q$ as a second-order predicate, i.e., a subset of $\Pred_{n_1}(\str M) \times \ldots \times \Pred_{n_k}(\str M)$. A generalized quantifier $Q$ of type $\langle n_1,\ldots,n_k \rangle$ defines a second-order predicate over any $L_2$ model $\str M$:
$$ Q_{\str M} = Q_M \cap \bigl(\Pred_{n_1}(\str M) \times \ldots \times \Pred_{n_k}(\str M)\bigr). $$

\begin{defin}
A sentence $\sigma$ of $L_2(\Q)$  \emph{implicitly defines} a generalized quantifier $Q$ if, for every $L_2$ model $\str M$, the only second-order predicate over $\str M$ satisfying $\sigma$ is $Q_{\str M}$.\end{defin}

Compare this with the notion of (explicit) definability:  

\begin{defin} A formula $\sigma(P_1,\ldots,P_k)$ of $L_2$ (explicitly) defines a generalized quantifier $Q$ if for every $L_2$ model $\str M$, and every $R_1 \in \Pred_{n_1}(\str M), \ldots, R_k \in \Pred_{n_k}(\str M)$:  $$(\str M,R_1,\ldots,R_k) \models \sigma(P_1,\ldots,P_k) \text{ iff } \langle R_1,\ldots, R_k\rangle \in Q_{\str M}.$$
\end{defin}

We can now state and prove a slightly weaker form of the main result of \cite{feferman2012quantifiers}:

\begin{lem}\label{lemma:feferman}
All generalized quantifiers of type $\langle n_1,\ldots,n_k \rangle$, where $n_i \neq n_j$, $i \neq j$, implicitly definable in $L_2(\Q)$ are definable in first-order logic.
\end{lem}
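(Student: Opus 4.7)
The plan is to exploit the freedom Henkin semantics affords in choosing $\Pred_m(\str M)$: by shrinking each arity slot down to a singleton, the second-order content of any implicit definition of $\Q$ should collapse, leaving a purely first-order residue.

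First I would fix a sentence $\sigma$ of $L_2(\Q)$ that implicitly defines $Q$ and take an arbitrary candidate $(M,\bar R)$ with $\bar R = (R_1,\ldots,R_k) \in \mathcal P(M^{n_1}) \times \cdots \times \mathcal P(M^{n_k})$. The key construction is the Henkin model $\str M_{\bar R}$ with domain $M$, $\Pred_{n_i}(\str M_{\bar R}) = \{R_i\}$ for each $i$ (well-defined because the $n_i$ are pairwise distinct), and $\Pred_m(\str M_{\bar R}) = \{\emptyset\}$ for every other arity $m$ occurring in $\sigma$. Since each $\Pred_m(\str M_{\bar R})$ is a singleton, every second-order quantifier in $\sigma$ collapses to substitution of the unique available predicate: bound $Q$ of arity $n_i$ becomes the free predicate variable $P_i$, while bound $Q$ of any auxiliary arity $m$ becomes the empty predicate (so atomic $Q(\bar x)$ becomes $\bot$). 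Carrying this substitution uniformly through $\sigma$ produces a formula $\tilde\sigma$ in which, by the distinctness of the $n_i$, every occurrence of $\Q$ reads exactly $\Q(P_1,\ldots,P_k)$. Because $\Pred_{n_1}(\str M_{\bar R}) \times \cdots \times \Pred_{n_k}(\str M_{\bar R})$ is the singleton $\{\bar R\}$, only two interpretations of $\Q$ over $\str M_{\bar R}$ are possible: the ``positive'' $T = \{\bar R\}$ and the ``negative'' $F = \emptyset$. Letting $\sigma^T$ (respectively $\sigma^F$) denote $\tilde\sigma$ with every $\Q(P_1,\ldots,P_k)$-atom replaced by $\top$ (respectively $\bot$), a routine induction on the structure of $\sigma$ should yield $(\str M_{\bar R}, T) \models \sigma$ iff $(M,\bar R) \models \sigma^T$, and similarly for $F$.

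The argument then concludes by observing that $Q_{\str M_{\bar R}}$ equals $T$ precisely when $\bar R \in Q_M$ and equals $F$ otherwise, so implicit definability forces the biconditional $\bar R \in Q_M \iff (M,\bar R) \models \sigma^T$; as $\sigma^T$ contains no second-order quantifiers, it is the sought first-order definition of $Q$. The chief delicacy I anticipate lies in the bookkeeping of the inductive translation, specifically in guaranteeing that every $\Q$-atom surfaces after substitution as exactly $\Q(P_1,\ldots,P_k)$: this is precisely where the hypothesis $n_i \neq n_j$ is indispensable, for otherwise predicate variables of a common arity could be swapped between slots of $\Q$ and the candidate interpretations of $\Q$ would no longer reduce to the two-element dichotomy $\{T,F\}$ on which the entire reduction rests.
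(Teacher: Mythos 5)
Your proposal is correct and follows essentially the same route as the paper: collapse every Henkin predicate range to a singleton so that second-order quantification reduces to substitution, observe that only the two interpretations $\{\bar R\}$ and $\emptyset$ of $\Q$ remain available, and read off the first-order definition $\sigma^T$ via implicit definability. The only (harmless) divergence is in how auxiliary arities are neutralized: you let them range over $\{\emptyset\}$ and substitute the empty predicate, whereas the paper lets them range over the empty set and argues that such variables cannot occur in the implicit definition at all, which forces a small side case (quantifiers that are empty in every model) that your variant avoids.
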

\begin{proof}
Let us give the full proof for the case where $k=1$ and $n_1=1$, i.e., where $Q$ is of type $\langle 1 \rangle$. The general result follows by the same argument. 

Assume $Q$ is implicitly definable in $L_2$ by the formula $\theta$,
where $\theta$ is written in prenex normal form.
For any $A \subseteq M$ let $M_A$ be the model of $L_2$ in which the first-order variables range over the set $M$, the second-order
\emph{unary} variables range over the singleton set $\set{A}$, and the polyadic second-order variables range over $\emptyset$.

Let $A \subseteq M$ be such that $A \in Q_M$ (if there are no such $M$ and $A$
then $Q$ is first-order definable). Since $\theta$ implicitly defines $Q$
we know that $Q_M \cap \Pred_1(M_A) =  \set{A}$
satisfies $\theta$, but $\emptyset$ does not. We may conclude that there cannot be
any polyadic variables occurring in $\theta$ since then $\theta$ would be true or false
(only depending on the first polyadic quantifier being existential or universal) in $M_A$
regardless of how $\Q$ is interpreted.

Let $\varphi$ be the first-order formula we get by removing the second-order
prefix from $\theta$ and replacing
all second-order variables by the single unary predicate symbol $P$. We also
replace all occurrences of $\Q(P)$ by some true sentence, for example
$\exists x Px \lor \lnot \exists x Px$.

It should be clear that for any $A \subseteq M$, $$(M,A) \models  \varphi \text{ iff } (M_A,\set{A}) \models 
\theta.$$ Note that $(M_A,X)$ is to be interpreted as the model $M_A$ in which $X$ is the interpretation of the quantifier symbol (second-order predicate symbol) $\Q$.

We know that $\theta$ implicitly defines $Q$, meaning that $(M_A,\set{A}) \models \theta$
iff $\set{A} = Q_M \cap M_A$. Clearly $\set{A} = Q_M \cap M_A$ iff $A \in Q_M$.

Thus, for any $A \subseteq M$, $$(M,A) \models  \varphi \text{ iff } A \in Q_M.$$
The first-order sentence $\varphi$ (explicitly) defines $Q$.
\end{proof}

An easy observation now gives us the full result:

\begin{thm}[\cite{feferman2012quantifiers}]\label{thm:feferman}
All generalized quantifiers implicitly definable in $L_2(\Q)$ are definable in first-order logic.
\end{thm}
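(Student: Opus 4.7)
The plan is to extend the argument of Lemma~\ref{lemma:feferman} to accommodate repeated arities. Given $\theta$ implicitly defining $Q$ of type $\langle n_1,\ldots,n_k\rangle$ and a candidate tuple $\bar A=(A_1,\ldots,A_k)$, I would consider the $L_2$-model $M_{\bar A}$ whose second-order universe is given by $\Pred_n(M_{\bar A})=\set{A_i | n_i=n}$ for each $n\in\set{n_1,\ldots,n_k}$, and $\Pred_n(M_{\bar A})=\emptyset$ for every other arity $n$. The first step of the lemma --- that $\theta$ can contain no second-order quantifier of an arity outside $\set{n_1,\ldots,n_k}$, since such a quantifier would be vacuous in $M_{\bar A}$ and force $\theta$'s truth to be independent of the interpretation of $\Q$, contradicting implicit definability --- transfers verbatim, because each $\Pred_{n_i}(M_{\bar A})$ still contains $A_i$ and is therefore non-empty.

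The new ingredient is that the remaining second-order quantifiers, all of arity in $\set{n_1,\ldots,n_k}$, now range over finite but possibly non-singleton sets. I would unfold each $\forall X^{(n_i)}\,\psi$ into $\bigwedge_{j:\,n_j=n_i}\psi[A_j/X]$ and each $\exists$ into the corresponding finite disjunction. After unfolding, every atom involving $\Q$ has the form $\Q(A_{j_1},\ldots,A_{j_k})$ with $n_{j_i}=n_i$. Viewing the $A_i$ as free predicate symbols $P_i$ and abstracting each such $\Q$-atom as a propositional variable $q_{\bar j}$ yields a formula $\theta^*(P_1,\ldots,P_k,\bar q)$ that is first-order in $P_1,\ldots,P_k$ and Boolean in $\bar q$, and whose truth in $(\str M,\bar A,\bar q)$ matches the truth of $\theta$ in $M_{\bar A}$ when $\Q$ is coded by $\bar q$.

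Implicit definability then says that, for each $\str M$ and $\bar A$, there is exactly one Boolean assignment $\bar q$ satisfying $\theta^*$, and its coordinate $q_{(1,\ldots,k)}$ is true iff $\bar A\in Q_M$. Hence $\bar A\in Q_M$ is equivalent to the existence of some $\bar q$ with $q_{(1,\ldots,k)}=\top$ making $\theta^*$ true; since $\bar q$ ranges over a finite Boolean space, this unfolds into a finite disjunction of first-order formulas in $P_1,\ldots,P_k$, each obtained from $\theta^*$ by substituting $\top$ or $\bot$ for the propositional atoms. That disjunction is the sought explicit first-order definition of $Q$. The only point requiring care is the verification of the unfolding step, which is a routine induction on the second-order prenex structure of $\theta$ using only the finiteness of each $\Pred_{n_i}(M_{\bar A})$; no deeper obstacle arises.
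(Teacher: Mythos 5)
Your argument takes a genuinely different route from the paper --- you re-prove the Lemma directly for repeated arities by unfolding the finitely many second-order quantifiers, whereas the paper keeps the Lemma as stated and removes repetitions by the padding trick $Q\mapsto Q^{+l}$ (replacing $R_l$ by $R_l\times M$ until all arities are distinct). Your unfolding is sound as far as it goes, but there is a genuine gap at the step ``implicit definability then says that there is exactly one Boolean assignment $\bar q$ satisfying $\theta^*$.'' Your propositional variables $q_{\bar j}$ are indexed by \emph{index tuples} $(j_1,\ldots,j_k)$, while the candidate interpretations of $\Q$ over $M_{\bar A}$ are subsets of $\Pred_{n_1}(M_{\bar A})\times\cdots\times\Pred_{n_k}(M_{\bar A})$, i.e.\ they assign truth values to \emph{tuples of predicates}. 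The map $\bar j\mapsto (A_{j_1},\ldots,A_{j_k})$ is injective only when the $A_j$ of equal arity are pairwise distinct. If, say, $n_1=n_2$ and $A_1=A_2$, then $\Q(A_1,\ldots)$ and $\Q(A_2,\ldots)$ are the \emph{same} atom over $M_{\bar A}$, yet you abstract them to independent propositional variables. Only the ``coherent'' assignments --- those constant on index tuples denoting the same predicate tuple --- correspond to interpretations of $\Q$, and implicit definability says nothing about the incoherent ones. Hence your final biconditional is only half proved: if $\bar A\in Q_M$ the induced coherent assignment witnesses the disjunction, but conversely an \emph{incoherent} $\bar q$ with $q_{(1,\ldots,k)}=\top$ could satisfy $\theta^*$ even though the unique coherent satisfying assignment has $q_{(1,\ldots,k)}=\bot$, i.e.\ even though $\bar A\notin Q_M$. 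Exactly the tuples with repetitions --- which are the whole point of going beyond the Lemma --- are the ones where this breaks.

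The gap is repairable: whether $A_j=A_{j'}$ is first-order expressible by $\forall\bar x\,(P_j(\bar x)\leftrightarrow P_{j'}(\bar x))$, so you can case-split on the equality pattern of $\bar A$ and, within each case, restrict the disjunction to assignments $\bar q$ that are coherent for that pattern, conjoining the corresponding equalities and inequalities to each disjunct. That yields a correct (if bulky) first-order definition. But note how the paper's proof of the Theorem avoids the whole issue: padding the $l$-th argument to $R_l\times M$ forces all arities to be distinct, after which each $\Pred_{n_i}(M_{\bar A})$ is a singleton, every assignment is automatically coherent, and the Lemma applies verbatim. You should either adopt that reduction or supply the coherence bookkeeping explicitly.
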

\begin{proof}Given a quantifier of type $\langle n_1,\ldots,n_k \rangle$ we define 
$Q^{+l}$ of type $$\langle n_1,\ldots,n_{l-1},n_l+1,n_{l+1},\ldots, n_k \rangle$$ as follows:
$$(Q^{+l})_M = \set{\langle R_1,\ldots,R_{l-1},R_l\times M, R_{l+1},\ldots,R_k | \langle R_1,\ldots,R_k \rangle \in Q_M}.$$
It should be clear that $Q^{+l}$ is implicitly definable in $L_2$ if $Q$ is, and that $Q$ is definable in first-order logic if $Q^{+l}$ is. The theorem then follows directly from the lemma.
\end{proof}

\section*{Conclusion and Discussion}

The proof of the theorem rests on the simple idea that this general version of Henkin semantics does not allow second-order quantification to be used in a substantial way, not because of deep results but of completely elementary reasons. I believe the above argument suggests that the specific choice of Henkin semantics for $L_2$ is plainly wrong. Maybe a more natural choice would be to include some comprehension axioms for the Henkin semantics: Only consider models $\str M$ satisfying $$\forall \bar R \, \exists P\, \forall \bar x\, \bigl(\varphi(\bar x, \bar R) \leftrightarrow P(\bar x)\bigr),$$ 
where $\varphi$ is any formula of $L_2$. This leads us to a stronger notion of implicit definability and the following open question:

\begin{que}
Which quantifiers are implicitly definable in $L_2$ equipped with this stronger form of Henkin semantics?
\end{que}

Regardless of the choice of semantics to use there is clearly something funny with thinking of arbitrarily complicated formulas of $L_2$ as somehow corresponding to inference rules. 
Instead, to find the correct model theoretic take on Belnap's idea to characterize logicality in terms of unique definability, we need to analyze the concept of an \emph{inference rule}  to see what the right logic to express such rules is. A first attempt in this direction might be the observation that many inference rules can be formalized by a $\Pi_1^1$ formula. This leads us directly to the following question:

\begin{que}
Which quantifiers are implicitly definable in full second-order logic (i.e., second-order logic with standard semantics) with a $\Pi_1^1$ sentence?
\end{que}

Clearly, any first-order definable quantifier is implicitly definable with a $\Pi_1^1$ sentence. In fact, any $\Delta_1^1$ definable quantifier is. 

\begin{que}
Are there $\Delta^1_1$ definable quantifiers that are not first-order definable?
\end{que}

\bibliography{refs}
\bibliographystyle{alpha}

\end{document}